\renewcommand*{\backref}[1]{}
\renewcommand*{\backrefalt}[4]{%
    \ifcase #1 Not cited.%
    \or        Cited on page~#2.%
    \else      Cited on pages~#2.%
    \fi}
\let\phi\varphi 
\newtheorem{theorem}{Theorem}
\newtheorem{remark}[theorem]{Remark}
\newtheorem{corollary}[theorem]{Corollary}
\newtheorem{definition}[theorem]{Definition}
\newtheorem{lemma}[theorem]{Lemma}
\newtheorem{proposition}[theorem]{Proposition}
\newcommand{\NN}{\mathbb{N}}
\title{The Klein bottle has stably unbounded homeomorphism group}
\author{Lukas Böke\footnote{boeke@math.lmu.de}}
\date{}
\begin{document}

\maketitle

\begin{abstract}
Using a recent result of Bowden, Hensel and Webb, we prove the existence of a homeomorphism with positive stable commutator length in the group of homeomorphisms of the Klein bottle which are isotopic to the identity.
\end{abstract}

\section{Introduction}
In this article, we study the (un-)boundedness of norms on the group of homeomorphisms of the Klein bottle that are isotopic to the identity.
The study of questions of this type first appeared in the work of Burago, Ivanov and Polterovich \cite{BIP}, who considered the identity components of groups of compactly supported $C^\infty$-diffeomorphisms of various manifolds.
They proved that for all compact 3-manifolds and certain compact 4-manifolds these groups only admit conjugation-invariant norms that are bounded.
Their results have been extended to higher dimensional manifolds in \cite{tsuboi}.
These proofs use so-called displacement techniques, which have since been generalised, e.g.\ in \cite{cfflm} to prove the vanishing of bounded group cohomology.

In the case of compact surfaces, the situation is different:
by Bavard duality \cite{bavard}, the existence of non-trivial homogeneous quasi-morphisms implies that commutator length is unbounded.
Bowden, Hensel and Webb introduced the fine curve graph, which can be treated with the techniques of Bestvina and Fujiwara, who proved that the mapping class group of a hyperbolic surface allows infinitely many independent homogeneous quasi-morphisms \cite{Bestvina}.
For compact orientable surfaces, \cite{BHW1} and \cite{BHW2} show the existence of quasi-morphisms and therefore the existence of unbounded norms.
These techniques were adapted to non-orientable surfaces with non-orientable genus at least 3 in \cite{kim-kuno}.

We use a recent result from \cite{BHW2} to show that the group of homeomorphisms of the Klein bottle $K$ which are isotopic to the identity is stably unbounded with respect to commutator length in the sense of \cite{BIP}.

\paragraph{Acknowledgements.}
The author is deeply indebted to Sebastian Hensel for suggesting the topic and for helpful discussions.
The author also wants to thank the referee for their helpful comments.

\section{Background}

For any surface $S$, we define $\operatorname{Homeo}(S)$ to be the group of homeomorphisms of $S$ with compact support.
We denote the path-component of the identity by $\operatorname{Homeo}_0(S)$.
The \emph{mapping class group} of $S$ is then 
\[\operatorname{Mcg}(S) \coloneqq \faktor{\operatorname{Homeo}(S)}{\operatorname{Homeo}_0(S)}.\]
If $M\subset S$ is a finite subset, then we write $\operatorname{Homeo}(S,M)$ for the group of homeomorphisms that fix $M$ setwise, and call the elements of $M$ \emph{marked points}.
In the same way as before, we write $\operatorname{Homeo}_0(S,M)$ for the path-component of the identity, and $\operatorname{Mcg}(S,M)$ for the quotient.

We distinguish sets of points that are fixed pointwise from sets fixed setwise in the following way:
for a finite set $N\subset S$ that we want to be fixed pointwise, we consider the compactly supported homeomorphisms of the punctured manifold $S\setminus N$.
Once more, we have a path-component of the identity $\operatorname{Homeo}_0(S\setminus N,M)$ and a mapping class group $\operatorname{Mcg}(S\setminus N,M)$.
We can go back and forth between these different interpretations by restricting maps or extending them on a finite number of points.

The Birman exact sequence provides a description of some elements in the groups described above (see Theorems 4.6 and 9.1 in \cite{primer}):
the homomorphism Push maps homotopy classes of curves in the unordered configuration space $\operatorname{UConf}_m(S)$ of $m$ points in S to classes in $\operatorname{Mcg}(S,M)$.
These are called \emph{(multi-)point-pushes}.

Informally, these can be understood as grabbing a point of a surface, and moving it around along a path on the surface while stretching respectively compressing the surface around that point.
More formally, we can think of a (multi-) curve as a homotopy between inclusions of one or more points into the surface, and the $\operatorname{Push}$ map sending it to the homotopy class of its extensions to the whole surface.
Since Push is injective, we can make this visible by describing a left inverse on its image:
for a representative $f$ of $\phi \in \operatorname{im}(\operatorname{Push})$, we can find a path $f_t$ in $\operatorname{Homeo_0}(S)$ with $f_0 = \operatorname{id}$ and $f_1 = f$.
For $\chi(S) <0$, all such paths will be homotopic (see the series of papers by Hamstrom \cite{hamstrom1},\cite{hamstrom2},\cite{hamstrom3} as well as \cite{quintas}).
Evaluating this at one of the marked points $s_i$ gives us a path $\gamma_i \coloneqq f_t(s_i)$.
Combining these paths for $1\leq i \leq m$ gives us the desired loop in $\pi_1(\operatorname{UConf}_m(S),M)$.

We conclude this section by showing a useful lemma for dealing with multi-point-pushes:
\begin{lemma}\label{lem:properties-multi-pt-push}
Let $S$ be a (possibly non-orientable) surface with finitely many punctures, marked points $M \coloneqq \{s_1, \dotsc, s_m\}$ and $\chi(S) < 0$.
Fix a class $l\in \pi_1(\operatorname{UConf}_m(S), M)$ and its image $\operatorname{Push}(l)\in \operatorname{Mcg}(S,M)$.
Let $\pi: \widehat S \rightarrow S$ be a covering of degree $k \in \NN$, and denote lifts with respect to $\pi$ with a hat: $\widehat \quad$.
Let $\phi \in \operatorname{Mcg}(S,M)$.
Then:
\begin{enumerate}
    \item $\phi\operatorname{Push}(l)\phi^{-1} = \operatorname{Push}(\phi(l))$
    \item $ \widehat{\operatorname{Push}(l)} = \operatorname{Push}(\,\widehat l\,) $
\end{enumerate}
\end{lemma}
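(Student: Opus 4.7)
The plan is to realise both identities at the level of isotopy representatives of the push map and then descend to the homotopy classes. This is the explicit description of $\operatorname{Push}$ recalled just before the lemma: pick an isotopy $(F_t)_{t\in[0,1]}$ in $\operatorname{Homeo}_0(S)$ with $F_0=\operatorname{id}$ and $F_1$ a representative of $\operatorname{Push}(l)$, so that the loop $t\mapsto\{F_t(s_1),\dots,F_t(s_m)\}$ represents $l$. For each part I would modify this isotopy in a different way.

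For part (1), I would conjugate the isotopy. Fix a representative $\widetilde{\phi}\in\operatorname{Homeo}(S,M)$ of $\phi$ and set $G_t\coloneqq\widetilde{\phi}\circ F_t\circ\widetilde{\phi}^{-1}$. Then $G_0=\operatorname{id}$ and $G_1$ represents $\phi\,\operatorname{Push}(l)\,\phi^{-1}$. Evaluating $G_t$ at $s_i$ gives $\widetilde{\phi}(F_t(\widetilde{\phi}^{-1}(s_i)))$, i.e.\ the image under $\widetilde{\phi}$ of the strand of $l$ starting at $\widetilde{\phi}^{-1}(s_i)$. Assembling these strands over $i$ produces the loop $\phi(l)$ in the unordered configuration space, so $G_1$ represents $\operatorname{Push}(\phi(l))$. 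The minor subtlety is that $\widetilde{\phi}$ may permute the elements of $M$, which is absorbed into the unordered nature of $\operatorname{UConf}_m(S)$.

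For part (2), I would lift the isotopy instead of conjugating it. By the homotopy lifting property of the covering $\pi$, $(F_t)$ lifts uniquely to an isotopy $(\widehat{F}_t)$ of $\widehat{S}$ with $\widehat{F}_0=\operatorname{id}$, and $\widehat{F}_1$ is then the distinguished lift $\widehat{\operatorname{Push}(l)}$. Evaluating $\widehat{F}_t$ at the preimages $\pi^{-1}(s_i)$ yields, for each $i$, all $k$ lifts of the $i$-th strand of $l$ simultaneously; collating these $km$ paths produces the loop $\widehat{l}$ in $\operatorname{UConf}_{km}(\widehat{S})$. Hence $\widehat{F}_1$ represents $\operatorname{Push}(\widehat{l})$.

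In both parts the condition $\chi(S)<0$ is used only implicitly, to guarantee that the loop extracted from the isotopy is independent of the representative isotopy chosen, via the contractibility of path-components of $\operatorname{Homeo}_0(S)$ due to Hamstrom and Quintas cited in the setup. The main obstacle I expect is purely bookkeeping: making precise that ``evaluate at the marked points and collate'' really produces $\phi(l)$, respectively $\widehat{l}$, in the unordered configuration space, with the correct accounting for how $\widetilde{\phi}$ permutes the marked points in (1) and how $\pi^{-1}$ enumerates the $k$ lifts of each strand in (2).
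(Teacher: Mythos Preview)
Your proposal is correct and follows essentially the same approach as the paper: both parts are proved by applying the left inverse of $\operatorname{Push}$ (evaluate an isotopy at the marked points) after, respectively, conjugating the isotopy by a representative of $\phi$ in part~(1) and lifting the isotopy through the covering in part~(2). Your treatment is in fact slightly more explicit than the paper's in naming the homotopy lifting property and in isolating where the hypothesis $\chi(S)<0$ enters.
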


\begin{proof}
For both statements, we will argue that both mapping classes have the same image under the left inverse of Push described above.

First, let again $f$ be a representative of $\operatorname{Push}(l)$, and $f_t$ a path in  $\operatorname{Homeo_0}(S)$ with $f_0 = \operatorname{id}$ and $f_1 = f$.
For $\phi$ there is a permutation $\sigma: \{1,\dotsc,m\}\rightarrow \{1,\dotsc,m\}$ such that $\phi(s_i) = s_{\sigma(i)}$, as $\phi$ fixes the set $M$.
Let $g$ denote a representative of $\phi$.
Note that conjugating $f_t$ by $g$ gives us a path to a representative of $\phi\operatorname{Push}(l)\phi^{-1}$, which is therefore again in $\operatorname{Homeo_0}(S)$.
We can now consider the curves 
\[ (g\circ f_t \circ g^{-1})(s_i) =  (g\circ f_t )(s_{\sigma^{-1}(i)}) = g(\gamma_{\sigma^{-1}(i)}).\]
As we work with the fundamental group of an unordered configuration space, these curves represent $\phi(l)$, and therefore $g\circ f_t \circ g^{-1}$ is a representative of $\operatorname{Push}(\phi(l))$.

For the second statement, we first explain some of the lifts that appear in the statement:
the fibre over each marked point in $M$ consists of $k$ points, and a curve representing a component of $l$ starting at that point downstairs has $k$ lifts, one for each point in that fibre.
We denote the union of these fibres by $\widehat M$.
This means that we lift a curve in $\pi_1(\operatorname{UConf}_m(S), M)$ to a curve in $\pi_1(\operatorname{UConf}_{km}(\widehat S),\widehat M)$.
Evaluating a homotopy from the trivial element to a representative of a class in $\operatorname{Mcg}(\widehat S,\widehat M)$ at each marked point gives us a loop in $\operatorname{UConf}_{km}(\widehat S)$.
Comparison with the lift of $l$ gives the result.
\end{proof}

\section{Main Result}

We begin with the Klein bottle $K$, and introduce one puncture $q$ and one marked point $p\neq q$.
A class $v \in \pi_1(K\setminus \{q\}, p)$ induces a homeomorphism of the punctured Klein bottle via the homomorphism Push in the Birman exact sequence.

We can lift this setting to the torus $T$:
fix a covering map $\operatorname{pr}: T \rightarrow K$, and denote by $p_1,p_2$ the lifts of $p\in K$ with respect to $\operatorname{pr}$, and analogously $q_1,q_2$ for $q$, as well as $P\coloneqq \{p_1,p_2\}$ and $Q \coloneqq \{q_1,q_2\}$.
The class $v$ can be lifted as well, giving us two classes of curves:
the first starts at $p_1$ and is denoted by $v^{(1)}$, the second starts at $p_2$ and is denoted by $v^{(2)}$.
This pair defines a class of loops $\widehat v \coloneqq \{ v^{(1)},v^{(2)}\}$ in $\operatorname{UConf}_2(T\setminus Q)$.
Again, this loop defines a multi-point-push in the mapping class group of the torus with two punctures and two marked points via the homomorphism Push.

Our strategy will be to find a pseudo-Anosov point-push on the Klein bottle such that its lift to the torus satisfies the requirements of the following theorem.

\begin{theorem}[Theorem 5.2 in \cite{BHW2}]\label{thm:bhw-criterion}
Let $S$ be a closed, connected, oriented surface of genus at least 1.
Suppose $F$ is a Thurston representative of a multi-point-pushing pseudo-Anosov mapping class relative to its set of 1-prongs $P_1$ so that $[F]\in \operatorname{Mcg}(S, P_1)$ is not conjugate to its inverse.
Then $F$ has positive stable commutator length (scl) in $\operatorname{Homeo}(S)$.
\end{theorem}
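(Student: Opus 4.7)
The natural approach is to produce a nontrivial homogeneous quasi-morphism $\omega\colon \operatorname{Homeo}(S) \to \mathbb{R}$ with $\omega(F) \neq 0$; positivity of stable commutator length then follows from Bavard duality.

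The vehicle for constructing such a quasi-morphism is the fine curve graph $C^\dagger(S)$ introduced in \cite{BHW1}, whose vertices are essential simple closed curves as point-sets (not isotopy classes) and whose edges record disjointness. The group $\operatorname{Homeo}(S)$ acts on it by isometries, and for closed surfaces of genus at least $1$ it is known to be Gromov-hyperbolic. My first step would be to verify that $F$ acts loxodromically: because $F$ is a Thurston representative with pseudo-Anosov foliations having the prescribed $1$-prongs at $P_1$, iterating $F$ on any simple closed curve produces a sequence converging in Hausdorff distance to the unstable foliation, which translates into an unbounded quasi-orbit, hence a quasi-axis, for $F$ in $C^\dagger(S)$.

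With the quasi-axis in hand, the plan is to run the Bestvina--Fujiwara construction of a Brooks-type counting quasi-morphism. For such a quasi-morphism to take a positive value on $F$ one needs the axes of $F$ and $F^{-1}$ not to be \emph{anti-aligned}, i.e.\ not to coincide as unoriented objects. This is exactly what the hypothesis that $[F]$ is not conjugate to its inverse in $\operatorname{Mcg}(S, P_1)$ delivers: any homeomorphism reversing the axis would, after trivialising the ambient isotopy, witness such a conjugacy in the mapping class group rel $P_1$.

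The technical core, and the step I expect to be hardest, is verifying the WPD/independence-type hypothesis needed to feed the axis into the Bestvina--Fujiwara machinery. Point-stabilisers for the $\operatorname{Homeo}(S)$-action on $C^\dagger(S)$ are enormous, so the subsurface-projection arguments familiar from the mapping class group acting on the ordinary curve graph do not transfer; genuinely new input is needed to bound the set of homeomorphisms that nearly fix a long segment of the axis of $F$. The rigidity supplied by the invariant singular foliations of $F$, together with the bookkeeping at the $1$-prongs $P_1$, is what one would use to control such near-stabilisers; this is the technical heart of \cite{BHW2}. Once it is in place, Bavard duality converts the resulting unbounded homogeneous quasi-morphism into the inequality $\operatorname{scl}(F) > 0$.
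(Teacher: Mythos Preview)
The paper does not contain a proof of this statement: Theorem~\ref{thm:bhw-criterion} is quoted verbatim as Theorem~5.2 of \cite{BHW2} and used as a black box in the proof of the Corollary. There is therefore nothing in the present paper to compare your proposal against.

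That said, your outline is a faithful high-level summary of the strategy of \cite{BHW2}: act on the fine curve graph, show the Thurston representative $F$ is loxodromic, and run the Bestvina--Fujiwara machine to produce a homogeneous quasi-morphism nonvanishing on $F$, invoking Bavard duality at the end. You have also correctly identified the genuine difficulty, namely that the $\operatorname{Homeo}(S)$-action on $C^\dagger(S)$ is far from acylindrical, so one cannot simply import the WPD arguments from the classical curve graph; the substitute in \cite{BHW2} is a careful analysis of how homeomorphisms that coarsely stabilise a long segment of the axis of $F$ must respect the singular foliations and in particular the $1$-prong set $P_1$. Your sketch is honest about this gap but does not fill it, so as written it is an outline rather than a proof; for the purposes of the present paper, however, no more is required, since the result is imported wholesale.
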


The main result of this paper is the following:

\begin{theorem}\label{thm:not-conj-to-inv}
There exists a point-push $\operatorname{Push}(v) \in \operatorname{Mcg}(K\setminus \{q\}, \{p\})$ such that its lift $\widehat{\operatorname{Push}(v)}$ is not conjugate to its inverse in $\operatorname{Mcg}(T,Q\cup P)$.
\end{theorem}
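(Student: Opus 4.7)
By Lemma~\ref{lem:properties-multi-pt-push}(1) together with injectivity of $\operatorname{Push}$ on $\operatorname{Mcg}(T\setminus Q, P)$ (valid because $\chi(T\setminus Q)=-2<0$), the required non-conjugacy in $\operatorname{Mcg}(T,Q\cup P)$ reduces to exhibiting $v$ such that no $\phi\in\operatorname{Mcg}(T,Q\cup P)$ satisfies $\phi_*(\widehat v)=\widehat v^{-1}$ in $\pi_1(\operatorname{UConf}_2(T\setminus Q),P)$. Unfolded as an equality of unordered pairs $\{\phi_*v^{(1)},\phi_*v^{(2)}\}=\{(v^{(1)})^{-1},(v^{(2)})^{-1}\}$, this splits into two cases: $\phi$ fixes $p_1,p_2$ and inverts each lift (Case A), or $\phi$ swaps $p_1,p_2$ and inverts the pair (Case B).

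Next I would pass to $H_1(T\setminus Q)=\mathbb{Z}^3$, with basis $\{a,b,\gamma_1\}$ consisting of the two $H_1(T)$-classes together with a small loop around one puncture. The deck involution $\tau$ of $T\to K$ is orientation-reversing and swaps the two punctures, so $\tau_*=\operatorname{diag}(-1,1,1)$, and by Lemma~\ref{lem:properties-multi-pt-push}(2) one has $[v^{(2)}]=\tau_*[v^{(1)}]$. Writing $[v^{(1)}]=(\alpha,\beta,\gamma)$, a short linear-algebra computation forces $\phi_*|_{H_1(T)}$ to be $-I$ in Case~A and $\operatorname{diag}(1,-1)$ in Case~B, with additional divisibility conditions on the $(a,b)\to\gamma_1$ mixing coefficients that depend on $\alpha,\beta,\gamma$. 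Choosing $v$ with, say, $[v^{(1)}]=(1,3,1)$ eliminates every homologically-admissible $\phi$ except for the hyperelliptic involution $\iota\colon(x,y)\mapsto(-x,-y)$, which fixes $p_1,p_2$, swaps $q_1,q_2$ and acts as $-I$ on all of $H_1(T\setminus Q)$.

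To rule out $\iota$ itself I would work in $\pi_1(T\setminus Q,p_1)=F_3=\langle a,b,\gamma_1\rangle$. An explicit computation shows $\iota_*\colon a\mapsto a^{-1}$, $b\mapsto b^{-1}$, $\gamma_1\mapsto\eta\,\gamma_2\,\eta^{-1}$ for a definite conjugator $\eta\in F_3$, where $\gamma_2=\gamma_1^{-1}[a,b]$ is the peripheral loop around the other puncture. Choosing $v$ so that $v^{(1)}$ is represented by a cyclically reduced word whose cyclic conjugacy class in $F_3$ is distinct from that of $(v^{(1)})^{-1}$ after applying $\iota_*$ (a property that holds generically and can be certified by explicit cyclic reduction), one obtains $\iota_*(v^{(1)})\not\sim(v^{(1)})^{-1}$ in $F_3$. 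Since any other candidate $\phi$ fixing $p_1,p_2$ and inducing $-I$ on $H_1(T)$ differs from $\iota$ by a push of $p_1$ — which acts on $\pi_1(T\setminus Q,p_1)$ by an inner automorphism — the same non-conjugacy rules out the entire remaining coset and completes Case~A.

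The main obstacle is this Case~A step: the hyperelliptic involution is invisible to every abelian invariant, so the argument must operate with a genuinely non-abelian test. Everything else — the Lemma-based reduction, the case split, and the divisibility eliminations for Case~B — is routine bookkeeping once an appropriate $v$ has been chosen; the delicacy lies in producing a word for $v^{(1)}$ for which the cyclic conjugacy-class obstruction is easy to verify by hand.
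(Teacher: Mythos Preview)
There are two genuine gaps.

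\textbf{Gap 1: you never handle $\phi(Q)\neq Q$.} Elements of $\operatorname{Mcg}(T,Q\cup P)$ permute the four points $P\cup Q$ arbitrarily, so a putative conjugator $\phi$ need not send $Q$ to $Q$. Your very first reduction --- rewriting the conjugacy as $\phi_*(\widehat v)=\widehat v^{-1}$ in $\pi_1(\operatorname{UConf}_2(T\setminus Q),P)$ --- already presupposes $\phi(Q)=Q$ and $\phi(P)=P$, since otherwise $\phi$ does not act on that configuration space at all; the subsequent Case~A/Case~B split only decides whether $\phi$ swaps $p_1,p_2$. The paper treats exactly this issue: it uses forgetful maps $\operatorname{Keep}_{x,y}\colon\operatorname{Mcg}(T\setminus(P\cup Q))\to\operatorname{Mcg}(T\setminus\{x,y\})$ and an explicit computation to exclude both the mixed case $\phi(Q)=\{q_i,p_j\}$ and the swap $\phi(Q)=P$, before invoking anything like Lemma~\ref{lem:properties-multi-pt-push}.

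\textbf{Gap 2: the homological pinning to $\iota$ fails.} In Case~A you assert that any $\phi$ fixing $p_1,p_2$ and inducing $-I$ on $H_1(T)$ differs from the hyperelliptic $\iota$ by a push of $p_1$, hence by an inner automorphism of $\pi_1(T\setminus Q,p_1)$. This is false. Take $\iota$ composed with the Dehn twist about a simple closed curve bounding a disc containing $q_1,q_2$: this still acts as $-I$ on $H_1(T)$ (indeed trivially on all of $H_1(T\setminus Q)$), fixes every marked point, yet that twist is a nontrivial element of $\operatorname{Mcg}(T\setminus Q)$ and therefore, by the Birman sequence for the basepoint $p_1$, does \emph{not} act on $F_3=\pi_1(T\setminus Q,p_1)$ by an inner automorphism. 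More generally the map $\operatorname{Mcg}(T\setminus Q)\to\operatorname{GL}_3(\mathbb{Z})$ has nontrivial kernel, so homology cannot reduce the problem to a single outer automorphism. Your cyclic-reduction check against the one automorphism $\iota_*$ therefore does not close Case~A.

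The paper avoids Gap~2 entirely by strengthening the requirement on $v$: via Whitehead's algorithm it produces a word $w=v^{(1)}\in F_3$ that is not automorphism-equivalent to its inverse under \emph{any} automorphism of $F_3$ (Proposition~\ref{prop:not-inv-free-gp}). Once $\phi(Q)=Q$ and $\phi(p_1)=p_1$ are established, every admissible $\phi$ induces some automorphism of $F_3$ sending $w$ to $w^{-1}$, and all of these are excluded in one stroke --- no homological bookkeeping, no case-by-case analysis of $\iota$.
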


The underlying reason for this is a statement on the fundamental group of the torus with two punctures.
In the following, we will call elements $a,b$ of a group $G$ \emph{automorphism-equivalent} if there exists $f \in \operatorname{Aut}(G)$ such that $f(a) = b$.
The fundamental group of $T\setminus Q$ is the free group with 3 generators, where an algorithm for deciding whether two elements are automorphism-equivalent is known:
this is Whitehead's algorithm, which we briefly introduce in Appendix \ref{app:whitehead}.
We can now prove:

\begin{proposition}\label{prop:not-inv-free-gp}
There exists an element $v \in \pi_1(K\setminus\{q\}, p)$ such that its lift to $\pi_1(T\setminus Q, p_1)$ is not automorphism-equivalent to its inverse.
\end{proposition}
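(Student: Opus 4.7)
My plan is to reduce the geometric statement to a combinatorial check in a finite-rank free group and then exhibit a concrete element.

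First I would fix explicit presentations. The punctured Klein bottle deformation retracts to a wedge of two circles, so $\pi_1(K \setminus \{q\}, p) \cong F_2$, and I take free generators $a$ (orientation-preserving) and $b$ (orientation-reversing). The orientation character $\omega \colon F_2 \to \mathbb{Z}/2$ is then determined by $\omega(a) = 0$ and $\omega(b) = 1$, and the double cover $T \to K$ corresponds to $\ker \omega$. Applying the Nielsen--Schreier procedure with Schreier transversal $\{e, b\}$ gives free generators
\[
x \coloneqq a, \qquad y \coloneqq b a b^{-1}, \qquad z \coloneqq b^2
\]
for $\ker \omega \cong F_3 \cong \pi_1(T \setminus Q, p_1)$, so that the lift of any $v \in \ker \omega$ is computed by rewriting its word in $a, b$ in terms of $x, y, z$.

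With this explicit description in hand, the problem reduces to finding a single $v \in \ker \omega$ whose rewriting $\widehat v$ is not automorphism-equivalent to $\widehat v^{-1}$ in $F_3$. Whitehead's algorithm (recalled in Appendix \ref{app:whitehead}) decides this in finitely many steps: one first reduces $\widehat v$ and $\widehat v^{-1}$ via Whitehead moves to minimal-length cyclic representatives of their $\operatorname{Aut}(F_3)$-orbits, and then checks whether any of the finitely many length-preserving Whitehead automorphisms transports one minimum to the other. If not, we are done.

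The main obstacle lies in producing a candidate $v$ that actually works. Many short elements are inverse-equivalent for trivial reasons: any cyclic palindrome is fixed by the involution inverting every generator, and words supported on only two of the three generators $x, y, z$ inherit the (abundant) symmetries of $F_2$, which in most cases already identify an element with its inverse. I would therefore search among cyclically reduced words of moderate length that use all of $x, y, z$ asymmetrically, aiming for a Whitehead graph of $\widehat v$ that is not isomorphic to that of $\widehat v^{-1}$. Once such a $v$ is identified, the remaining verification is a finite and mechanical computation via Whitehead's algorithm.
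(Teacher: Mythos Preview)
Your reduction is exactly the paper's: identify $\pi_1(T\setminus Q,p_1)\cong F_3$, use Whitehead's algorithm to certify that a chosen word is not automorphism-equivalent to its inverse, and match it with an element of $\pi_1(K\setminus\{q\},p)$. The only structural difference is the direction of the search: the paper first finds the upstairs word $w=a^2c^2ac^{-1}\in F_3$ via Whitehead's algorithm (verified in GAP) and \emph{then} reads off a downstairs $v=\alpha^4(\alpha\beta^{-1})^2\alpha^2\beta\alpha^{-1}$ using an explicit picture of the generators, whereas you set up the Nielsen--Schreier lifting map first and propose to search over $\ker\omega$. Either order works; the paper's order is slightly more efficient because the Whitehead search is the expensive step and it is easier to run directly in $F_3$.

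Two points are worth flagging. First, since this is an existence statement whose only content is exhibiting a witness and verifying it, your sketch is not yet a proof: you describe a search strategy but stop short of producing the element and recording the (finite, mechanical) Whitehead verification. The paper's proof is precisely that verification plus the identification of $v$. Second, your search heuristic would have steered you away from the paper's example: $w=a^2c^2ac^{-1}$ is supported on only two of the three free generators, yet is not $\operatorname{Aut}(F_3)$-equivalent (hence not $\operatorname{Aut}(F_2)$-equivalent) to its inverse. So the intuition that two-generator words ``inherit the symmetries of $F_2$'' and should be discarded is misleading here.
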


\begin{proof}
We first exhibit a loop on the doubly punctured torus $T\setminus Q$ with the desired property, and then show that it is a lift of a curve on the Klein bottle.

Recall that $\pi_1(T\setminus Q,p_1) \cong F_3$ is the free group on three generators.
Using Whitehead's algorithm (see Appendix \ref{app:whitehead}) we identify $w \coloneqq a^2c^2ac^{-1}$ as not automorphism-equivalent to its inverse.
A script for GAP 4.12.2 and a log of a GAP session verifying that $w$ is not automorphism-equivalent to its inverse can be found in the accompanying Zenodo repository \cite{gap-repo}.
We keep $w$ as our candidate word for the rest of the article.

\begin{figure}
    \centering
    \includegraphics[width=\textwidth]{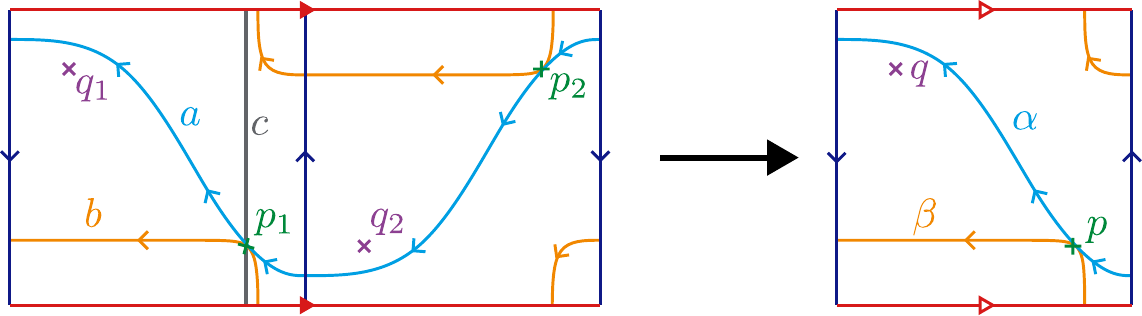}
    \caption{Setting with our choice of generators}
    \label{fig:generators}
\end{figure}

Figure \ref{fig:generators} fixes curves on the Klein bottle and torus that generate the appropriate groups.
By our choice of generators, we can see that our candidate word is the lift of the word $v \coloneqq \alpha^4 (\alpha \beta^{-1})^2 \alpha^2 \beta \alpha^{-1} $.
\end{proof}

\begin{remark}
Note that one can find many more elements of $\pi_1(T\setminus Q)$ with the desired properties, which we expect to lead to more, independent quasi-morphisms.
\end{remark}

We can now prove our main result:
\begin{proof}[Proof of Theorem \ref{thm:not-conj-to-inv}]
As in Proposition \ref{prop:not-inv-free-gp}, let $v \in \pi_1(K\setminus \{q\}, p)$ be our candidate word.
As above, we get a class  $\widehat v \coloneqq \{v^{(1)},v^{(2)}\}$ in $\pi_1( \operatorname{UConf}_2(T\setminus Q), P)$.
By Lemma \ref{lem:properties-multi-pt-push}, we have that $\widehat{\operatorname{Push}(v)}  = \operatorname{Push}(\widehat{v})$ in $\operatorname{Mcg}(T\setminus Q,P)$, which we now include into $\operatorname{Mcg}(T,Q\cup P)$.

Assume by contradiction that there exists $\phi \in \operatorname{Mcg}(T,Q\cup P)$ such that $\phi \operatorname{Push}(\widehat v) \phi^{-1}$ is the same class as $\operatorname{Push}(\widehat v^{-1})$.
Note that we can not immediately use Lemma \ref{lem:properties-multi-pt-push} here, as $\chi(T) =0$.

\emph{Claim:} we can assume that $\phi(Q) = Q$ and $\phi(p_1) = p_1$.
This claim implies that $\phi$ induces an automorphism of $\pi_1(T\setminus Q,p_1)$ that takes $v^{(1)} = w$ to its inverse, which is evident when evaluating the isotopies at $p_1$ as in the proof of Lemma \ref{lem:properties-multi-pt-push}.
But this contradicts Proposition \ref{prop:not-inv-free-gp}, so it remains to prove the claim to prove the theorem.

\emph{Proof of claim:}
We start by showing that $\phi$ can not send one point in $Q$ to $Q$ and the other one to $P$.
We achieve this by considering the images of $\phi \operatorname{Push}(\widehat v) \phi^{-1}$ and $\operatorname{Push}(\widehat v^{-1})$ under the natural homomorphism that forgets two of the punctures.
Note that these mapping classes both fix the points in $P\cup Q$ pointwise.
We denote this forgetful homomorphism by 
\[\operatorname{Keep}_{x,y}:\operatorname{Mcg}(T\setminus (P\cup Q))\rightarrow \operatorname{Mcg}(T\setminus \{x,y\})\]
to indicate that we forget all points but $x, y$ in $P\cup Q$.
Note that if we keep the two points in $\phi(Q)$, then the image of $\phi \operatorname{Push}(\widehat v) \phi^{-1}$ must be trivial, as we forget those points which are pushed.

Now assume that $\phi(Q) = \{q_i,p_j\}$ for some $i,j \in \{1,2\}$.
We can then check that $\operatorname{Keep}_{q_i,p_j}\left( \operatorname{Push}(\widehat v^{-1}) \right)$ is not trivial.
Hence, we have either $\phi(Q) = Q$ or $\phi(Q) = P$.
In the case that $\phi(Q) = P$, we use a similar argument, but we consider the images under $\operatorname{Keep}_{p_1,p_2}$.
We can check for non-triviality by applying $\operatorname{Push}(\widehat v^{-1})$ to a curve in the free group $\pi_1(T\setminus(P\cup Q),\ast)$ and finding that the original and the pushed curve, after forgetting two of the punctures, are different.
An implementation of this in GAP can be found in \cite{gap-repo}, including an illustration showing our choice of generators for $\pi_1(T\setminus(P\cup Q),\ast)$.

Now assume that $\phi(Q) = Q$ and $\phi(p_1) = p_2$.
Then, we can replace $\phi$ by $\phi \circ \tau$, where $\tau$ is the generator of the deck group of $T\rightarrow K$.
Note that $\tau \operatorname{Push}(\widehat v) \tau^{-1} = \operatorname{Push}(\widehat v)$ since $\operatorname{Push}(\widehat v)$ is a lift.
This concludes the proof of the theorem.
\end{proof}

We use the following criterion to determine whether a point-push is pseudo-Anosov:
\begin{theorem}[Kra's Construction, Theorem 14.6 in \cite{primer}]\label{thm:kra}
Let $S$ be a compact orientable surface with $\chi(S) <0$, and let $c \in \pi_1(S,p)$.
The mapping class $\operatorname{Push}(c)\in \operatorname{Mcg}(S,\{p\})$ is pseudo-Anosov if and only if $c$ fills $S$.
\end{theorem}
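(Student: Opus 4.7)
The plan is to apply the Nielsen--Thurston classification: since $\chi(S)<0$, a mapping class of $(S,\{p\})$ is pseudo-Anosov if and only if it is neither periodic nor reducible, i.e., no power of it fixes the isotopy class of an essential multicurve. The technical heart of the argument is a \emph{disjointness criterion}: for an essential simple closed curve $\gamma\subset S\setminus\{p\}$, the push $\operatorname{Push}(c)$ fixes the isotopy class $[\gamma]$ in $\operatorname{Mcg}(S,\{p\})$ if and only if $c$ can be freely homotoped into $S\setminus\gamma$. Granting this criterion, both directions of the theorem fall out quickly.

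For the forward direction I argue by contrapositive. Suppose $c$ does not fill. If $c$ is null-homotopic, then $\operatorname{Push}(c)=\mathrm{id}$ by injectivity of Push in the Birman exact sequence (valid because $\chi(S)<0$), hence is not pseudo-Anosov. Otherwise, by the definition of filling, there exists an essential simple closed curve $\gamma$ disjoint from some representative of $c$; the disjointness criterion then shows that $\operatorname{Push}(c)$ fixes $[\gamma]$ and is therefore reducible.

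For the reverse direction, assume $c$ fills. I must rule out $\operatorname{Push}(c)$ being periodic or reducible. It is not periodic: Push is injective (Birman, using $\chi(S)<0$) and $\pi_1(S,p)$ is torsion-free, so no non-trivial power of $c$ is trivial. It is not reducible: if some power $\operatorname{Push}(c)^n=\operatorname{Push}(c^n)$ preserved an essential multicurve, then the disjointness criterion applied to one of its components would produce an essential simple closed curve disjoint from $c^n$, and hence from $c$ (as they share their free-homotopy support), contradicting that $c$ fills. Nielsen--Thurston then forces $\operatorname{Push}(c)$ to be pseudo-Anosov.

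The main obstacle is the disjointness criterion itself. A clean approach works in the universal cover $\widetilde{S}$: represent $\operatorname{Push}(c)$ by a lift of an isotopy translating the basepoint $\tilde p$ by the deck element $c\in\pi_1(S)$. Fixing the axis of (a lift of) $\gamma$ then amounts to $c$ being compatible with the deck action of $\gamma$, and since centralizers of non-trivial elements in a torsion-free surface group are maximal cyclic, this forces $c$ into the annular subgroup of $\gamma$, which is realised by loops disjoint from $\gamma$. Alternatively one can proceed directly, using a push representative supported in an arbitrarily thin regular neighbourhood of a representative of $c$ and tracking geometric intersection with $\gamma$; this is the route taken in \cite{primer}.
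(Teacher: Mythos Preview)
The paper does not give its own proof of this theorem: it is quoted as Theorem~14.6 of \cite{primer} and used as a black box, with Remark~\ref{rem:non-or-kra} noting only that the argument there carries over to the punctured Klein bottle. So there is nothing in the paper to compare your argument against; your sketch is in fact an outline of the proof in the cited reference.

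That said, your universal-cover justification of the disjointness criterion contains a genuine error. From ``$\operatorname{Push}(c)$ fixes $[\gamma]$'' you pass to ``$c$ fixes the axis of a lift of $\gamma$'', then invoke that centralisers in surface groups are cyclic to force $c$ into the ``annular subgroup'' $\langle\gamma\rangle$. This conclusion is much too strong. Take $S$ closed of genus~$2$, $\gamma$ a separating curve, and $c$ any nontrivial loop in one complementary component: then $\operatorname{Push}(c)$ is supported away from $\gamma$ and hence fixes $[\gamma]$, yet $c$ is neither a power of $\gamma$ nor in its centraliser. The correct target of the criterion is the image of $\pi_1$ of a component of $S\setminus\gamma$ in $\pi_1(S)$, not the cyclic subgroup $\langle\gamma\rangle$; your chain of implications conflates ``$c$ preserves a specific lift of $\gamma$'' (which would give commutation) with ``$\operatorname{Push}(c)$ preserves the isotopy class of $\gamma$ rel~$p$'' (which does not). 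The alternative route you mention---put both curves in minimal position and track geometric intersection after pushing, as in \cite{primer}---is the one that actually works; the universal-cover sketch, as written, does not establish the hard direction of the criterion. The rest of your outline (injectivity of $\operatorname{Push}$ for non-periodicity, and the passage from $c^n$ disjoint from $\gamma$ to $c$ disjoint from $\gamma$ via geodesic representatives) is fine.
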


\begin{remark}\label{rem:non-or-kra}
Note that this theorem is stated for orientable surfaces.
But the proof given in \cite{primer} applies to the punctured Klein bottle as well, and hence $\operatorname{Push}(v)$ is a pseudo-Anosov.
This relies on a non-orientable version of the Nielsen-Thurston classification as found in \cite{wu} and \cite{non-orientable-classification}.
\end{remark}

We now prove the result in the title:

\begin{corollary}
The group $\operatorname{Homeo}_0(K)$ with the commutator length is stably unbounded.
In particular, there exists a non-trivial homogeneous quasi-morphism $\operatorname{Homeo}_0(K) \rightarrow \mathbb{R}$.
\end{corollary}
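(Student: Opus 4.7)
The plan is to transfer the positive stable commutator length provided by Theorem \ref{thm:bhw-criterion} on the torus down to $\operatorname{Homeo}_0(K)$ via a lifting homomorphism, and then to invoke Bavard duality.

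First I would verify the hypotheses of Theorem \ref{thm:bhw-criterion} for $\widehat{\operatorname{Push}(v)}$ on $T$. Remark \ref{rem:non-or-kra} ensures that $\operatorname{Push}(v) \in \operatorname{Mcg}(K\setminus\{q\},\{p\})$ is pseudo-Anosov; since pseudo-Anosov classes lift to pseudo-Anosov classes under finite covers (the invariant measured foliations lift to invariant measured foliations with the same dilatation), $\widehat{\operatorname{Push}(v)}$ is a multi-point-pushing pseudo-Anosov in $\operatorname{Mcg}(T,Q\cup P)$ whose $1$-prongs occur at the pushed points. Theorem \ref{thm:not-conj-to-inv} supplies the remaining hypothesis, namely that this class is not conjugate to its inverse. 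Hence Theorem \ref{thm:bhw-criterion}, applied to the closed, connected, oriented genus-one surface $T$, produces a Thurston representative $F \in \operatorname{Homeo}(T)$ with $\operatorname{scl}_{\operatorname{Homeo}(T)}(F) > 0$.

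Next I would construct a lifting homomorphism $\Phi \colon \operatorname{Homeo}_0(K) \to \operatorname{Homeo}_0(T)$. The deck involution $\tau$ of $\operatorname{pr}\colon T \to K$ is orientation-reversing, since $K$ is non-orientable, so $\tau \notin \operatorname{Homeo}_0(T)$. For any $f \in \operatorname{Homeo}_0(K)$, lifting an isotopy from the identity produces a lift of $f$ in $\operatorname{Homeo}_0(T)$; the only other lift, $\tau\widehat{f}$, fails to lie in $\operatorname{Homeo}_0(T)$, so $\Phi(f) \coloneqq \widehat{f}$ is well-defined independently of the chosen isotopy and, by concatenation of lifted isotopies, a group homomorphism. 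Choosing a Thurston representative $F_K$ of $\operatorname{Push}(v)$ on $K\setminus\{q\}$ and extending by the identity at $q$ yields $\widetilde{F_K} \in \operatorname{Homeo}_0(K)$ whose lift $\Phi(\widetilde{F_K})$ can be identified with a Thurston representative $F$ of $\widehat{\operatorname{Push}(v)}$ on $T$. Since stable commutator length is non-increasing under group homomorphisms and under passage to a larger ambient group,
\[
\operatorname{scl}_{\operatorname{Homeo}_0(K)}(\widetilde{F_K}) \;\geq\; \operatorname{scl}_{\operatorname{Homeo}_0(T)}(F) \;\geq\; \operatorname{scl}_{\operatorname{Homeo}(T)}(F) \;>\; 0.
\]

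Positive stable commutator length of $\widetilde{F_K}$ shows that commutator length on $\operatorname{Homeo}_0(K)$ is stably unbounded, and Bavard duality then supplies the required non-trivial homogeneous quasi-morphism, which is non-vanishing on $\widetilde{F_K}$. The main technical hurdle is the identification in step two: verifying that the $\tau$-equivariant lift of a Thurston representative on $K\setminus\{q\}$, extended by the identity at $Q$, really is a Thurston representative of $\widehat{\operatorname{Push}(v)}$ relative to its set of $1$-prongs, so that Theorem \ref{thm:bhw-criterion} applies to $\Phi(\widetilde{F_K})$ directly. This in turn reduces to the compatibility of the Thurston normal form with finite covers and with treating removable punctures as $1$-prong singularities.
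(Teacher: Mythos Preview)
Your overall strategy matches the paper's: verify the hypotheses of Theorem~\ref{thm:bhw-criterion} on $T$, then transfer positive scl down to $K$. Your transfer step via the lifting homomorphism $\Phi$ (using that the deck involution $\tau$ is orientation-reversing, hence not in $\operatorname{Homeo}_0(T)$, so the lift landing in $\operatorname{Homeo}_0(T)$ is unique) is a clean alternative to the paper's argument, which lifts commutator factorisations of $F^n$ directly and handles the $\tau$-ambiguity by observing that $\operatorname{scl}(\tau\circ\widehat{F}) = \operatorname{scl}(\widehat{F})$. Both arguments work; yours packages the same idea as monotonicity of scl under a genuine homomorphism.

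There is, however, a real gap in your treatment of the $1$-prong hypothesis. You write that the $1$-prongs ``occur at the pushed points'', i.e.\ at $P=\{p_1,p_2\}$. But Theorem~\ref{thm:bhw-criterion} requires the non-conjugacy to hold in $\operatorname{Mcg}(T,P_1)$ where $P_1$ is the \emph{full} set of $1$-prongs, and Theorem~\ref{thm:not-conj-to-inv} only supplies non-conjugacy in $\operatorname{Mcg}(T,Q\cup P)$. You therefore need $P_1 = P\cup Q$, i.e.\ $1$-prongs at all four marked points, not just the two pushed ones. This is not automatic: a priori the foliation could have a higher-order singularity (or be regular) at $q_1,q_2$. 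The paper closes this gap on $K$: the Euler--Poincar\'e formula forces at least one $1$-prong among $\{p,q\}$, and for each $x\in\{p,q\}$ the image of $\operatorname{Push}(v)$ under the forgetful map $\operatorname{Keep}_x$ is a point-push on the once-punctured Klein bottle, which always fixes an essential simple closed curve and hence is not pseudo-Anosov. This rules out having a $1$-prong at only one of $p,q$, so both carry $1$-prongs; lifting then gives $1$-prongs at all of $P\cup Q$, matching the group in Theorem~\ref{thm:not-conj-to-inv}. Without this step the hypotheses of Theorem~\ref{thm:bhw-criterion} are not verified.
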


\begin{proof}
Take $\operatorname{Push}(v)$ and $\operatorname{Push}(\widehat v)$ as before.
First, $\operatorname{Push}(v)$ is pseudo-Anosov by Remark \ref{rem:non-or-kra}, and its lift $\operatorname{Push}(\widehat v)$ is pseudo-Anosov as well.
This follows as we can consider a Thurston representative of $\operatorname{Push}(v)$, lift its invariant foliations, and find that in the lifted setting, all conditions for being pseudo-Anosov are still satisfied.
Next, we need to check that a Thurston representative of $\operatorname{Push}(v)$ has 1-prongs at $p$ and $q$.
Note that the Euler-Poincar\'e formula (see chapter 11 of \cite{primer} or Expos\'e 5 of \cite{flp}) requires us to have at least one 1-prong.
We consider the image of $\operatorname{Push}(v)$ under $\operatorname{Keep}_x$ for $x\in \{p,q\}$.
One can check that any point-push in $\operatorname{Mcg}(K\setminus\{x\})$ fixes an essential simple closed curve, which contradicts having only one 1-prong at $x$, as this would imply that this point-push is pseudo-Anosov.
As this argument is independent of the choice of $x$, we get that a representative of $\operatorname{Push}$ has 1-prongs at $p$ and $q$.
By Theorem \ref{thm:not-conj-to-inv}, $\operatorname{Push}(\widehat v)$ is not conjugate to its inverse in $\operatorname{Mcg}(T, Q\cup P)$.
As the conditions of Theorem \ref{thm:bhw-criterion} are satisfied, we get that a Thurston representative of $\operatorname{Push}(\widehat v)$ has positive scl in $\operatorname{Homeo}(T)$.

Let $F$ be a Thurston representative of $\operatorname{Push}(v)$.
Then, $F$ lifts to a Thurston representative $\widehat{F}$ of $\operatorname{Push}(\widehat v)$ (cf.\  Expos\'e 12 in \cite{flp}) which has positive scl.
Assuming that $\operatorname{scl}(F)=0$ leads to a contradiction:
for each power of $F$, assume we have a factorisation $F^n = \prod_{i=1}^{a_n} [f_i^{(n)},g_i^{(n)}] $ for some sequence $a_n$ such that $\lim_{n\rightarrow \infty} a_n/n = 0$.
We note that each of $F^n$, $f_i^{(n)}$ and $g_i^{(n)}$ has two possible lifts to the torus, which differ by the non-trivial deck transformation, and all these lifts commute with the deck transformation.
Also note that $\operatorname{scl}(\tau \circ \widehat{F}) = \operatorname{scl}(\widehat F)$.
Any such factorisation of $F^n$ can therefore be lifted to a factorisation of $\widehat F^n$ or $\tau \circ \widehat F^n$ into commutators, contradicting $\operatorname{scl}(\widehat F) > 0$.
\end{proof}

\begin{remark}
Note that the restriction of a homogeneous quasi-morphism on $\operatorname{Homeo}(K)$ to diffeomorphisms is still a homogeneous quasi-morphism.
As diffeomorphisms are dense (see e.g. Chapter 2 of \cite{hirsch}), we can use the automatic continuity of homogeneous quasi-morphisms on homeomorphism groups (see the appendix of \cite{BHW1}), which shows that its restriction to $\operatorname{Diff}_0(K)$ is still an unbounded quasi-morphism.
\end{remark}

\bibliographystyle{alpha}
\bibliography{refs}

\begin{thebibliography}{CFFLM24}

\bibitem[Bav91]{bavard}
Christophe Bavard.
\newblock Longueur stable des commutateurs.
\newblock {\em Enseign. Math. (2)}, 37:109--150, 1991.

\bibitem[BF02]{Bestvina}
Mladen Bestvina and Koji Fujiwara.
\newblock Bounded cohomology of subgroups of mapping class groups.
\newblock {\em Geometry \& Topology}, 6(1):69–89, March 2002.

\bibitem[BHW22]{BHW1}
Jonathan Bowden, Sebastian Hensel, and Richard Webb.
\newblock Quasi-morphisms on surface diffeomorphism groups.
\newblock {\em J. Amer. Math. Soc.}, 35:211--231, 2022.

\bibitem[BHW24]{BHW2}
Jonathan Bowden, Sebastian Hensel, and Richard Webb.
\newblock {Towards the boundary of the fine curve graph}.
\newblock \url{https://arxiv.org/abs/2402.18948}, 2024.

\bibitem[BIP08]{BIP}
Dmitri Burago, Sergei Ivanov, and Leonid Polterovich.
\newblock Conjugation-invariant norms on groups of geometric origin.
\newblock In {\em Groups of Diffeomorphisms}, volume~52 of {\em Advanced
  Studies in Pure Mathematics}, pages 221--250. Math. Soc. Japan, 2008.

\bibitem[Bö24]{gap-repo}
Lukas Böke.
\newblock Accompanying code for the article {``The Klein bottle has stably
  unbounded homeomorphism group"}.
\newblock \url{https://doi.org/10.5281/zenodo.11277168}, 2024.

\bibitem[CFFLM24]{cfflm}
Caterina Campagnolo, Francesco Fournier-Facio, Yash Lodha, and Marco
  Moraschini.
\newblock Displacement techniques in bounded cohomology.
\newblock \url{https://arxiv.org/abs/2401.08857}, 2024.

\bibitem[FLP12]{flp}
Albert Fathi, Francois Laudenbach, and Valentin Poenaru.
\newblock {\em Thurston's work on surfaces}.
\newblock Princeton University Press, Princeton, 2012.

\bibitem[FM12]{primer}
Benson Farb and Dan Margalit.
\newblock {\em {A Primer on Mapping Class Groups (PMS-49)}}.
\newblock Princeton University Press, Princeton, 2012.

\bibitem[Ham62]{hamstrom1}
Mary-Elizabeth Hamstrom.
\newblock {Some global properties of the space of homeomorphisms on a disc with
  holes}.
\newblock {\em Duke Mathematical Journal}, 29(4):657 -- 662, 1962.

\bibitem[Ham65]{hamstrom2}
Mary-Elizabeth Hamstrom.
\newblock {The space of homeomorphisms on a torus}.
\newblock {\em Illinois Journal of Mathematics}, 9(1):59 -- 65, 1965.

\bibitem[Ham66]{hamstrom3}
Mary-Elizabeth Hamstrom.
\newblock {Homotopy groups of the space of homeomorphisms on a $2$-manifold}.
\newblock {\em Illinois Journal of Mathematics}, 10(4):563 -- 573, 1966.

\bibitem[Hir88]{hirsch}
Morris~W. Hirsch.
\newblock {\em Differential topology}.
\newblock Graduate texts in mathematics. Springer, New York, third edition,
  1988.

\bibitem[HL74]{higgins-lyndon}
P.~J. Higgins and R.~C. Lyndon.
\newblock {Equivalence of Elements Under Automorphisms of a Free Group}.
\newblock {\em Journal of the London Mathematical Society}, s2-8(2):254--258,
  07 1974.

\bibitem[KKar]{kim-kuno}
Mitsuaki Kimura and Erika Kuno.
\newblock Quasimorphisms on nonorientable surface diffeomorphism groups.
\newblock {\em Advances in Geometry}, to appear.

\bibitem[KPW23]{non-orientable-classification}
Sayantan Khan, Caleb Partin, and Rebecca~R. Winarski.
\newblock {Pseudo-Anosov homeomorphisms of punctured nonorientable surfaces
  with small stretch factor}.
\newblock {\em Algebr. Geom. Topol.}, 23, 2023.

\bibitem[Lau97]{lau}
Michael Lau.
\newblock {A Computer Implementation of Whitehead’s Algorithm}.
\newblock
  \url{https://sites.science.oregonstate.edu/math_reu/proceedings/REU_Proceedings/Proceedings1997/1997Lau.pdf},
  1997.

\bibitem[Qui65]{quintas}
Louis~V. Quintas.
\newblock {The homotopy groups of the space of homeomorphisms of a multiply
  punctured surface}.
\newblock {\em Illinois Journal of Mathematics}, 9(4):721 -- 725, 1965.

\bibitem[Tsu08]{tsuboi}
Takashi Tsuboi.
\newblock On the uniform perfectness of diffeomorphism groups.
\newblock In {\em Groups of Diffeomorphisms}, volume~52 of {\em Advanced
  Studies in Pure Mathematics}, pages 505--524. Math. Soc. Japan, 2008.

\bibitem[Whi36a]{whitehead-1}
J.~H.~C. Whitehead.
\newblock {On Certain Sets of Elements in a Free Group}.
\newblock {\em Proceedings of the London Mathematical Society},
  s2-41(1):48--56, 1936.

\bibitem[Whi36b]{whitehead-2}
J.~H.~C. Whitehead.
\newblock On equivalent sets of elements in a free group.
\newblock {\em Annals of Mathematics}, 37(4):782--800, 1936.

\bibitem[Wu87]{wu}
Yingqing Wu.
\newblock Canonical reducing curves of surface homeomorphism.
\newblock {\em Acta Mathematica Sinica}, 3:305--313, 1987.

\end{thebibliography}

\newpage

\appendix
\section{Whitehead's Algorithm}\label{app:whitehead}
In this appendix, we briefly explain our use of Whitehead's algorithm.
The origins of this algorithm are Whitehead's papers \cite{whitehead-1}, \cite{whitehead-2}.
The proof of Whitehead's theorem has been simplified over time, one such simplification was given in \cite{higgins-lyndon} by Higgins and Lyndon.
In our implementation, we also incorporate some observations from \cite{lau} which simplify computations; \cite{lau} is part of the proceedings of a REU program at Oregon State University that took place in 1997.

Let $S$ be a finite set, and $F \coloneqq F(S)$ the free group generated by $S$, and set $L \coloneqq S \cup S^{-1}$.
We start by defining the two types of Whitehead automorphisms.

\begin{definition}
A permutation of $L$ which extends to an element of $\operatorname{Aut}(F)$ is called a \emph{Type I Whitehead automorphism}.

For a fixed $a\in L$, the \emph{Type II Whitehead automorphisms} send a generator $x$ to one of $x$, $xa$, $a^{-1}x$ or $a^{-1}xa$.
\end{definition}

The main ingredient of Whitehead's algorithm is the following theorem:
\begin{theorem}[Whitehead, 1936]\label{thm:whitehead}
    If $w,w' \in F$ are automorphism-equivalent such that $w'$ has minimal length for their automorphism-equivalence class, then there is some sequence $\alpha_1,\dotsc, \alpha_n$ of Whitehead automorphisms such that \\$(\alpha_n \circ \dotsm \circ \alpha_1)(w) = w'$, and for $1 \leq k \leq n$, we have that 
    \[\operatorname{length}((\alpha_k \circ \dotsm \circ \alpha_1)(w)) \leq \operatorname{length}((\alpha_{k-1} \circ \dotsm \circ \alpha_1)(w))\]
    with strict inequality whenever $\operatorname{length}((\alpha_{k-1} \circ \dotsm \circ \alpha_1)(w))$ is not minimal.
\end{theorem}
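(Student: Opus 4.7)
The plan is to prove Whitehead's theorem by the classical peak-reduction strategy, in the form streamlined by Higgins and Lyndon. I first invoke the standard fact that Type~I and Type~II Whitehead automorphisms generate $\operatorname{Aut}(F)$, so that some finite sequence of Whitehead moves takes $w$ to $w'$; the task is then to rearrange such a sequence into one whose length profile is monotone as claimed.

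The argument splits naturally along the two cases of the inequality. The strict-decrease clause --- $\operatorname{length}$ strictly drops whenever the current word is not of minimal length --- follows from what is usually called the descent lemma: if $v \in F$ is not of minimal length in its automorphism-equivalence class, then some Whitehead automorphism $\alpha$ satisfies $\operatorname{length}(\alpha(v)) < \operatorname{length}(v)$. I would prove this by expressing the length change of $v$ under a Type~II Whitehead automorphism with multiplier $a$ as a linear function of counts of letter adjacencies in $v$, and showing combinatorially that if no Type~II move strictly decreases $\operatorname{length}(v)$, then $v$ is already a global minimum in its orbit.

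For the clause that the sequence need only be non-increasing (and not strictly decreasing) once the minimum is reached, I use the peak reduction lemma: if $\alpha, \beta$ are Whitehead automorphisms with $\operatorname{length}(\alpha(v)) > \max(\operatorname{length}(v),\, \operatorname{length}(\beta\alpha(v)))$, then the composite $\beta\alpha$ can be rewritten as a product $\gamma_k \circ \dotsm \circ \gamma_1$ of Whitehead automorphisms with the same effect on $v$ but no intermediate word longer than $\max(\operatorname{length}(v),\, \operatorname{length}(\beta\alpha(v)))$. Starting from the arbitrary factorisation furnished by the generation result, iterated peak reduction --- with termination guaranteed by a well-ordering on the multiset of intermediate lengths, for example the lexicographic order on the decreasing rearrangement --- yields a peak-free, hence unimodal, length sequence. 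Since both $w$ and $w'$ majorise the true minimum $\operatorname{length}(w')$, the unimodal shape combined with the descent lemma (which rules out any strict ascent before the minimum is reached) collapses the sequence to one that is non-increasing throughout and strictly decreasing above the minimum.

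The main technical obstacle is the peak reduction lemma itself, whose proof rests on a case analysis of pairs $(\alpha,\beta)$ of Whitehead automorphisms indexed by the relative position of their multiplier letters $a, b \in L$: whether $a = b$, $a = b^{-1}$, whether $a$ lies in the distinguished subset defining $\beta$, and so on. In each case one must explicitly exhibit the replacement sequence $\gamma_1, \dotsc, \gamma_k$ and verify the length bound. I would follow the partition-theoretic bookkeeping of Higgins--Lyndon \cite{higgins-lyndon}, with the additional simplifications of \cite{lau}, which encode each Type~II Whitehead automorphism by a subset of $L$ avoiding a distinguished letter and reduce the verification of each case to an inclusion--exclusion count on occurrences of letters in $v$.
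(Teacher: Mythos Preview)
The paper does not actually prove this theorem: it is stated in the appendix as a classical result of Whitehead (1936), with references to \cite{whitehead-1}, \cite{whitehead-2} and the simplification of Higgins--Lyndon \cite{higgins-lyndon}, and is then used as a black box to justify the algorithm. There is therefore no ``paper's own proof'' to compare against.

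Your sketch is precisely the Higgins--Lyndon peak-reduction argument that the paper cites but does not reproduce, so in that sense you are aligned with the intended source. One small point worth tightening: peak-freeness alone gives a valley-shaped (non-increasing then non-decreasing) length profile, and since $w'$ is minimal the ascending tail is constant; but this by itself does not yet force \emph{strict} decrease at every step above the minimum---you could in principle have a plateau there. The clean way to get the statement as written is to first apply the descent lemma repeatedly to reach some minimal-length word $w''$ (this gives the strictly decreasing prefix), and then invoke peak reduction only on the pair $w'', w'$ of minimal-length words to obtain a length-preserving sequence between them. Your paragraph conflates these two phases slightly, but the ingredients are all there.
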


To make computations easier, we make two observations:
\begin{enumerate}
    \item Type I Whitehead automorphisms don't affect word length.
    \item We can assume that in a sequence $\alpha_1,\dotsc,\alpha_n$ of Whitehead automorphisms as in the theorem, the only Type I automorphism is $\alpha_n$.
\end{enumerate}

Next, we give a short description of our implementation found in the accompanying Zenodo repository \cite{gap-repo}:
let $w,w' \in F$ as in Theorem \ref{thm:whitehead}.
We restrict to the case that we have two words which have minimal length within their automorphism-equivalence class.
Next, we repeatedly apply all possible Type II automorphisms to $w$ to compile a list of distinct automorphism-equivalent words of minimal length.
We complete the list by applying all possible Type I automorphisms to the words in our list to find a complete list of words which are automorphism-equivalent to $w$ and of minimal length. 
We get that $w$ and $w'$ are automorphism-equivalent if and only if $w'$ appears in this list.

\end{document}